\renewenvironment{itemize} 
	{\begin{list}
		{$\bullet$}{\setlength{\parskip}{5pt} \setlength{\topsep}{0cm}
		 \setlength{\partopsep}{0cm} \setlength{\itemsep}{3pt} \setlength{\parsep}{0cm}\item[]}}
	{\end{list}}
\theoremstyle{plain}
\newtheorem{theorem}{Theorem}
\newtheorem{assumption}{Assumption}
\newtheorem{proposition}{Proposition}
\theoremstyle{definition}
\newtheorem{definition}{Definition}
\theoremstyle{remark}
\newtheorem{remark}{Remark}
\newcommand{\real}{\mathbb{R}}
\newcommand{\nneg}{\mathbb{R}_+}
\newcommand{\ntr}{\mathbb{N}}
\newcommand{\vc}{\mathbf}
\newcommand{\wtil}{\widetilde}
\newcommand{\pr}{\mathbb{P}}
\newcommand{\ex}{\mathbb{E}}
\newcommand{\eps}{\varepsilon}
\newcommand{\lmb}{\lambda}
\newcommand{\dlt}{\delta}
\begin{document}

\abovedisplayskip=.6\abovedisplayskip 
\belowdisplayskip=.6\belowdisplayskip 
\abovedisplayshortskip=.6\abovedisplayshortskip 
\belowdisplayshortskip=.6\belowdisplayshortskip 

\frenchspacing 

\title{Fluid Limit of a PS-queue with Multistage Service}

\author{ Maria Remerova and Bert Zwart\footnote{MR~is with
Korteweg-de vries Institute for Mathematics, University of Amsterdam, P.O. Box 94248, 1090 GE Amsterdam, The Netherlands. E-mail:
\url{M.Remerova@uva.nl}. . BZ~is with CWI. E-mail: \url{Bert.Zwart@cwi.nl}.
BZ~is also affiliated with EURANDOM, VU University Amsterdam, and
Georgia Institute of Technology.}}

\date{\today}

\maketitle

\begin{abstract}
The PS-model treated in this paper is motivated by freelance job websites where multiple freelancers compete for a single job. In the context of such websites, multistage service of a job means collection of applications from multiple freelancers. Under Markovian stochastic assumptions, we develop fluid limit approximations for the PS-model in overload. Based on this approximation, we estimate what proportion of freelancers get the jobs they apply for. In addition, the PS-model studied here is an instant of PS with routing and impatience, for which no Lyapunov function is known, and we suggest some partial solutions.

\vspace{2ex} \textit{Keywords:} processor-sharing, routing, fluid limits, Lyapunov functions, freelance job websites.

\vspace{2ex} \textit{MSC2010:} Primary 60K25, 60F17; Secondary 90B15, 90B22.

\end{abstract}

\section{Introduction}
The original motivation for this paper lies in a model different than the one claimed in the title. Namely, we are interested in freelance job websites, which have two kinds of visitors: customers offering jobs and freelancers, or servers, looking for jobs. The key feature of such websites is that multiple servers compete for a single job there. The most common situation is competition at the stage of application, i.e.\ to get the job. Along with that, the applicants might have to do the job, and then the one who has done it best gets paid --- this is, for example, the former principle of work of \url{flightfox.com}, a website for searching cheap flight connections.

To start with, we design a~basic model of a freelance job website, where there is a Poisson stream of customers and a Poisson stream of freelancers of rates $\lambda$ and $\mu$, respectively. Each customer upon arrival posts a job on the website main page and sets a patience clock that is distributed exponentially with parameter $\nu$. Each freelancer upon arrival picks a job from the main page at random and applies for it, in the form of leaving a comment. If there are no jobs, the freelancer leaves. At most $I$ applications are allowed per job. Once a customer receives the $I$-th application, or his patience expires, he should remove the job from the main page and continue communication with the applicants via private messaging. The state of this system is represented by the vector composed of the numbers of jobs on the main page with $i = 0,\ldots, I-1$ applications. Mathematically, the constraint on the number of applications per job is convenient since it makes the problem finite-dimensional. In practice, such a threshold  is always present implicitly: jobs with too many applications are not attractive anymore since the chance to get them is small. We are not aware of websites with an explicit threshold, but we suggest it as a guarantee of a chance to get a job, which is necessary in case of the ``do-it-best-get-paid'' policy or in case the website aims to expand and thus encourage unexperienced freelancers to join.

Our next observation (inspired by Borst et al. \cite{BBMNQ}) is that the basic freelance model is actually equivalent to a PS-queue where
\begin{itemize}
\item arrivals are Poisson of rate~$\lambda$,
\item customers re-enter the queue for~$I$ times with independent service requirements distributed exponentially with parameter $\mu$,
\item patience times of customers are exponentially distributed with parameter $\nu$.
\end{itemize}
The state of the PS-queue is, respectively, the vector composed of the numbers of customers who have entered the queue for $i = 1, \ldots ,I$ times, or we also say "{\it customers at stage~$i$ of service}". Finally, we generalise the model by allowing service requirements at different stages of service have different parameters $\mu_i$ (the distribution is still exponential).

The results of this paper concern fluid limit approximations of the suggested model in overload. We show that trajectories of the per-stage population process, when scaled properly, converge to solutions of a system of differential equations, which in turn stabilise to the unique invariant solution over time. Then we use the fluid limit approximation to estimate the chance for a freelancer to get a job.

As for the proof techniques, convergence of the scaled trajectories follows by the classical arguments of compact containment and oscillation control. To establish convergence of fluid limits to the invariant point, we use an equivalent description of fluid limits, which is a generalisation of the approximating equation suggested by Gromoll et al. \cite{GRZ08} for a single-stage-service PS queue. We also discuss the method of of Lyapunov functions since the model treated in this paper is an instant of a more general open problem: no Lyapunov function is known for a PS-queue with routing and impatience. We present some partial solutions to this open problem.

In the future, we aim to build on the motivation behind this chapter. A next logical step would be to  incorporate the service stage in addition to the application stage. We are mostly interested in the scenario when the same job is done multiple times, which mathematically is a special kind of dependence of job sizes. There are also optimization questions that arise in practice. For example, if freelancers are ranked in a way, what strategies should they follow to build and maintain a strong reputation? The majority of freelance websites exist at the cost of transaction fees, then what are the ways to increase website profits while keeping transaction fees affordable to visitors? Recently there has been more interest in analysing related problems, cf. \cite{2},\cite{5}, \cite{3}, \cite{7},\cite{1}.

The paper is organised as follows. In Section~\ref{FL:sec:model_description}, we discuss in detail how the PS-queue with multistage service arises from the basic freelance model. In Section~\ref{FL:sec:fluid_model}, we introduce two equivalent deterministic systems of equations that are analogues of the stochastic model and discuss their properties. We also discuss Lyapunov functions and estimate the probability of a freelancer getting a job. Section~\ref{FL:sec:fluid_limit} specifies the fluid scaling under which the stochastic model converges to its deterministic analogues. In Sections~\ref{FL:sec:fluid_model_proof} and~\ref{FL:sec:fluid_limit_proof}, the proofs for the results of Sections~\ref{FL:sec:fluid_model} and~\ref{FL:sec:fluid_limit} are presented. Section~\ref{FL:appendix} shows how the convergence to the invariant point in the single-stage-service case implies that for the multistage-service case. In the remainder of this section we list the notation we use throughout the paper.

\paragraph{Notation} To define $x$ as equal to $y$, we write $x:=y$ or $y=:x$. We abbreviate the left-hand side and right-hand side of an equation as ``LHS" and ``RHS", respectively.

The standard sets are: the natural numbers $\ntr:= \{1,2, \ldots  \}$, the real line $\real := (-\infty, \infty)$ and non-negative half-line $\nneg:=[0,\infty)$.

All vector notations are boldface. Unless stated otherwise, the coordinates of an $I$ dimensional vector are denoted by the same symbol (regular font instead of bold) with subscripts $1, \ldots, I$ added. Overlining and superscripts of vectors remain in their coordinates as well, for example $\overline{\vc{Q}}^r(t) = (\overline{Q}_1, \ldots, \overline{Q}_I)(t)$. The $\real^I$ space is endowed with the $L_1$-norm $\|\vc{x} \|_1 := \sum_{i=1}^I |x_i|$.

For the metric space $S=\real$, $\nneg$ or $\nneg^I$, the notation  $\mathbf{D}(\nneg, S)$ stands for the space of functions $f \colon \nneg \to S$ that are right-continuous with left limits. This space is endowed with the Skorokhod $J_1$-topology.

\section{Stochastic model} \label{FL:sec:model_description} In this section we introduce two stochastic models: a basic model of a freelance job website  and a processor sharing (PS) queue with multistage service. We then discuss in what sense the latter model generalises the former.

\paragraph{Basic model of a freelance job website} There are two types of visitors on a freelance job website: customers, who publish job descriptions, and freelancers, who apply for those jobs. We assume that new jobs appear on the website main page according to a Poisson process of rate~$\lambda$, and that freelancers intending to find a job visit the website according to a Poisson process of rate~$\mu$. As a freelancer looking for a job visits the website, he picks a job from the main page at random and applies for it, say leaves a comment. Each job is allowed to collect at most~$I$ applications while its patience time lasts, measured from the moment the job was published and distributed exponentially with parameter~$\nu$. All the random elements mentioned: the arrival processes of jobs and freelancers, and patience times of different jobs are mutually independent. As soon as a~job either gets~$I$ applications, or its patience time expires, the customer removes the job description from the main page and continues communication with the applicants elsewhere. In this model, our focus is on the process 
\[
\vc{Q}^{\textup{FL}}(t) = (Q_0^{\textup{FL}}, \ldots, Q_{I-1}^{\textup{FL}})(t), \quad t \geq 0,
\]
where $Q_i^{\textup{FL}}(t)$ is the number of jobs on the main page that have collected $i$ applications up to time instant~$t$.

\paragraph{PS-queue with multistage service} Now consider a PS queue with Poisson arrivals of rate~$\lambda$. We assume that each customer of this queue should undergo~$I$ stages of service, with stage~$i+1$ starting immediately upon completion of stage~$i$ and the service requirement at stage~$i$ distributed exponentially with parameter~$\mu_i$. A customer is supposed to leave the queue upon service completion, but if his patience time expires earlier, he abandons then. As in the previous model, patience times are distributed exponentially with parameter~$\nu$. The arrival process, service requirements of all customers at all stages, and patience times of all customers are mutually independent. Here we analyse the process
\[
\vc{Q}(t) = (Q_1, \ldots, Q_I)(t), \quad t \geq 0,
\]
where $Q_i(t)$ stands for the number of customers in stage~$i$ of service at time instant~$t$.

\paragraph{Equivalence of the two models in case all $\mu_i$'s are the same} Suppose that, in the second model, all service stages have the same distribution parameter~$\mu$. In this case, the processes $\vc{Q}^{\textup{FL}}(\cdot)$ and $\vc{Q}(\cdot)$ are distributed identically. The idea is that the jobs waiting for the~$i$-th application (i.e.\ those with~$i-1$ applications) can be viewed as customers of the PS-queue who are undergoing stage~$i$ of service, and the moments jobs receive applications --- as completions of stages of service in the PS-queue. When a freelancer applies for a job, he picks one at random. Correspondingly, if there is a service stage completion in the PS-queue, all of the service stages that have been ongoing are equally likely to be the one that has finished. That is due to the memoryless property of the exponential distribution and because all the $\mu_i$'s are the same.

The above insight originally belongs to Borst et al. \cite{BBMNQ}, who discussed the equivalence of PS and random order of service in the context of the $G/M/1$ queue. To formalise the idea they constructed a probabilistic coupling, which can be generalised in a straightforward way to the two models we consider here.

\paragraph{Dynamic equations} Most of the results presented in this paper are developed for the more general model of PS with multistage service. We assume it is defined on a probability space $(\Omega, \mathcal{F}, \pr)$ with expectation operator $\ex$. Denote the arrival process of customers, which is Poisson of rate~$\lambda$, by $A(\cdot)$. These are arrivals to stage~$1$ of service. Let $D_i^\textup{s}(\cdot)$ stand for the process of service completions at stage~$i$. Note that, for $i \leq I-1$, $D_i^\textup{s}(\cdot)$ is the arrival process to stage~$i+1$, and $D_I^\textup{s}(\cdot)$ is the process of departures due to total service completions. Finally, denote by $D_i^\textup{a}(\cdot)$ the process of abandonments due to impatience at stage~$i$. Since service requirements at all stages and patience times of all customers are distributed exponentially, and since the exponential distribution is memoryless, the processes  $D_i^\textup{s}(\cdot)$ and $D_i^\textup{a}(\cdot)$ are doubly stochastic Poisson with instantaneous rates $\mu_i Q_i (\cdot) / \| Q (\cdot) \|$ (zero by convention when the system is empty) and $\nu Q_i(\cdot)$, respectively. That is, the population process $\vc{Q}(\cdot) = (Q_1, \ldots, Q_I)(\cdot)$ can be represented as the unique (see e.g.~\cite{MMR98}) solution to the following system of equations: for $t \geq 0$,
\begin{equation} \label{FL:eq:dyn_1}
\begin{split}
Q_1(t) &= Q_1(0) + A(t) - D_1^\textup{s}(t) - D_1^\textup{a}(t), \\
Q_i(t) &= Q_i(0) + D_{i-1}^\textup{s}(t) - D_i^\textup{s}(t) - D_i^\textup{a}(t), \quad i \geq 2,
\end{split}
\end{equation}
with
\begin{equation} \label{FL:eq:dyn_2}
\begin{split}
D_i^\textup{s}(t) &= \Pi_i^\textup{s} \Bigl( \mu_i \int_0^t \frac{Q_i(u)}{\|\vc{Q}(u)\|_1} \, du \Bigr),\\
 D_i^\textup{a}(t) &= \Pi_i^\textup{a} \Bigl( \nu \int_0^t Q_i(u) du \Bigr),
\end{split}
\end{equation}
where $\Pi_i^\textup{s}(\cdot), \Pi_i^\textup{a}(\cdot)$ are Poisson processes of unit rate for all~$i$, and also the initial state $\vc{Q}(0) = (Q_1, \ldots, Q_I)(0)$, the arrival process $A(\cdot)$ and the processes $\Pi_i^\textup{s}(\cdot)$, $\Pi_i^\textup{a}(\cdot)$ are mutually independent.

Finally, throughout the rest of the paper, we assume the following.

\begin{assumption} \label{FL:ass:load} The system is overloaded, i.e.\ $\lambda \sum_{i=1}^I 1/\mu_i > 1$.
\end{assumption}

\section{Fluid model} \label{FL:sec:fluid_model}
In this section, we define and analyse a fluid model --- a deterministic analogue of the PS-queue with multistage service introduced above. We use the fluid model to estimate the chance of a freelancer getting a job when the application limit $I$ is large. In the next section, the fluid model is shown to approximate the stochastic PS model, where the time and space are appropriately normalised.

\begin{definition}
A function $\vc{z}(\cdot) = (z_1, \ldots, z_I)(\cdot) \colon \nneg \to \real_+^I$ that is continuous and such that  $\inf_{t \geq \delta} \| \vc{z}(t) \|_1 > 0$ for any $\delta > 0$ is called a {\it fluid model solution (FMS)} if it solves the following system of differential equations: for $t>0$,
\begin{equation} \label{FL:eq:fluid_model_dif}
\begin{split}
z_1'(t)  &= \lambda - \mu_1 \dfrac{z_1(t)}{\|\vc{z}(t)\|_1} - \nu z_1(t), \\
z_i'(t)  &= \mu_{i-1} \dfrac{z_{i-1}(t)}{\|\vc{z}(t)\|_1} - \mu_i \dfrac{z_i(t)}{\|\vc{z}(t)\|_1} - \nu z_i(t), \quad i \geq 2.
\end{split}
\end{equation}
\end{definition}

When investigating properties of FMS's, we will also use an alternative description of them. Let random variables $B_i$, $i= 1, \ldots, I$, and $D$, all defined on the same probability space $(\Omega, \mathcal{F}, \pr)$, be mutually independent and distributed exponentially, $B_i$ with parameter~$\mu_i$ for all~$i$ and $D$ with parameter~$\nu$. Introduce also
\[
B_j^i := \left\{
\begin{array}{ll}
\sum_{l=j}^i B_l, & j \leq i, \\
0, & j > i.
\end{array}
\right.
\]
It turns out that~\eqref{FL:eq:fluid_model_dif} is equivalent to the following system of integral equations: for $i = 1, \ldots, I$ and $t \geq 0$,
\begin{equation} \label{FL:eq:fluid_model_int}
\begin{split}
z_i(t) =& \sum_{j=1}^i z_j(0) \, \pr \Bigl\{ B_j^{i-1} \leq \int_0^t \frac{du}{\|\vc{z}(u)\|_1} < B_j^i, \, D > t \Bigr\} \\
&+ \lambda \int_0^t \pr \Bigl\{ B_1^{i-1} \leq \int_s^t \frac{du}{\|\vc{z}(u)\|_1} < B_1^i, \, D > t-s \Bigr\} \, ds.
\end{split}
\end{equation}
The two systems are equivalent in the sense that they have the same set of continuous, non-negative, non-zero outside $t=0$ solutions.

The differential equations \eqref{FL:eq:fluid_model_dif} capture the drift of the system, they are direct analogues of the stochastic equations \eqref{FL:eq:dyn_1}--\eqref{FL:eq:dyn_2}. The integral equations~\eqref{FL:eq:fluid_model_int} mimic the evolution of the stochastic system from an individual customer's prospective. Given a customer arrived at time instant~$s$, he is undergoing stage~$i$ of service at time instant $t \geq s$ if his patience time allows it, and if the amount of service he has received up to~$t$ covers the service requirements of the first $i-1$ stages completely and the service requirement of stage~$i$ only partially. This explains the second term in the RHS of~\eqref{FL:eq:fluid_model_int}. The first term has the same interpretation but in the context of customers who were present in the system at $t= 0$. Due to the memoryless property of the exponential distribution, the residual service requirements of the service stages that are ongoing at $t = 0$ are still exponentially distributed with the corresponding parameters.

A rigorous proof of the equivalence of the two descriptions of FMS's follows in Section~\ref{FL:sec:fluid_model_proof}. It exploits certain properties of the exponential and phase-type distributions.

We now proceed with the analysis of FMS's.

\begin{theorem} For any initial state $\vc{z}(0)$, a FMS exists and is unique.
\end{theorem}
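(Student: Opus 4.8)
The plan is to read the right-hand side of \eqref{FL:eq:fluid_model_dif} as a vector field $F$ on $\nneg^I$ whose only singular point is the origin, where the factor $1/\|\vc z\|_1$ blows up. On the open set $\{\vc z\in\nneg^I:\|\vc z\|_1>0\}$ each map $\vc z\mapsto z_i/\|\vc z\|_1$ is smooth, so $F$ is locally Lipschitz there, and the Picard--Lindel\"of theorem gives, for every $\vc z(0)$ with $\|\vc z(0)\|_1>0$, a unique maximal solution. The remaining work is to show this solution is global, stays in $\nneg^I$, never reaches the origin, and to treat the degenerate datum $\vc z(0)=\vc 0$ separately.

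First I would check forward invariance of the non-negative orthant by a boundary (Nagumo-type) argument: on the face $\{z_1=0\}$ one has $z_1'=\lambda>0$, and on $\{z_i=0\}$ with $i\ge 2$ one has $z_i'=\mu_{i-1}z_{i-1}/\|\vc z\|_1\ge 0$, so $F$ never points out of $\nneg^I$. Next, summing \eqref{FL:eq:fluid_model_dif} makes the internal service terms telescope, leaving
\begin{equation*}
\frac{d}{dt}\|\vc z(t)\|_1=\lambda-\mu_I\frac{z_I(t)}{\|\vc z(t)\|_1}-\nu\|\vc z(t)\|_1\le \lambda-\nu\|\vc z(t)\|_1,
\end{equation*}
so $\|\vc z(t)\|_1\le\max\{\|\vc z(0)\|_1,\lambda/\nu\}$ and no blow-up occurs. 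Hence the only way the maximal solution could fail to be global, or to satisfy the defining property $\inf_{t\ge\delta}\|\vc z(t)\|_1>0$, is by approaching the singularity $\|\vc z\|_1=0$.

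Ruling this out is the main obstacle, and it is exactly where Assumption~\ref{FL:ass:load} enters. The mechanism is that when the total mass is small the proportions $p_i:=z_i/\|\vc z\|_1$ evolve on the fast time scale $1/\|\vc z\|_1$ (indeed $p_i'=\|\vc z\|_1^{-1}[\mu_{i-1}p_{i-1}-\mu_i p_i-\lambda p_i+\mu_I p_1 p_I]$ and its $i=1$ analogue), so they are driven toward the quasi-stationary profile
\begin{equation*}
p_i=\frac{\lambda\,\mu_1\cdots\mu_{i-1}}{\prod_{k=1}^{i}(\mu_k+\beta)},\qquad \sum_{i=1}^I p_i=1,
\end{equation*}
where the balance parameter $\beta$ is fixed by the normalisation. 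At $\beta=0$ this constraint reads $\sum_i p_i=\lambda\sum_i 1/\mu_i$, which by Assumption~\ref{FL:ass:load} exceeds $1$; since the left-hand side is decreasing in $\beta$ and vanishes as $\beta\to\infty$, the balancing value $\beta^\star$ is strictly positive. Because the total-mass drift equals $\beta^\star-\nu\|\vc z\|_1+o(1)$ at this profile, it is strictly positive for small $\|\vc z\|_1$, so the mass is repelled from the origin and stays bounded below on each $[\delta,\infty)$. Converting this quasi-stationary heuristic into a rigorous lower bound --- e.g.\ through a singular-perturbation/averaging estimate controlling how fast $\vc p$ relaxes to $\vc p^\star$ --- is the step I expect to demand the most care.

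Finally, for $\vc z(0)=\vc 0$ I would show that any FMS must leave the origin linearly, $\vc z(t)=\beta^\star\vc p^\star\,t+o(t)$: substituting the ansatz $\vc z(t)=\vc a\,t+o(t)$ into \eqref{FL:eq:fluid_model_dif} forces $a_1=\lambda\|\vc a\|_1/(\|\vc a\|_1+\mu_1)$ and $a_i=\mu_{i-1}a_{i-1}/(\|\vc a\|_1+\mu_i)$, which is precisely the profile above with $\|\vc a\|_1=\beta^\star$. Thus the leading term is uniquely determined. Existence from the origin follows by starting at $\vc z(0)=\eps\vc v$ and letting $\eps\downarrow 0$, using the a priori bounds for compactness and the lower bound of the previous paragraph to exclude the limit collapsing to $\vc 0$ for $t>0$; uniqueness then reduces, via the forced short-time asymptotics, to the uniqueness already established away from the origin. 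The delicate point throughout is this behaviour at and near $\|\vc z\|_1=0$, both in proving solutions cannot reach it for $t>0$ and in pinning down the unique trajectory emanating from $\vc z(0)=\vc 0$.
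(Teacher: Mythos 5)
Your opening moves are sound and essentially subsume the paper's treatment of nonzero initial states: Picard--Lindel\"of away from the origin is the same mechanism as the paper's Gronwall argument (the RHS of \eqref{FL:eq:fluid_model_dif} is Lipschitz on $\{\|\vc{z}\|_1 \geq a\}$), the Nagumo invariance check is correct, and so is the telescoped identity $\frac{d}{dt}\|\vc{z}(t)\|_1 = \lambda - \mu_I z_I(t)/\|\vc{z}(t)\|_1 - \nu\|\vc{z}(t)\|_1$. But the two steps you yourself flag as delicate are precisely the content of the theorem, and they are left as heuristics. For the repulsion from the origin, your averaging picture needs two unproven ingredients: (a) that the fast flow of the proportions $\vc{p}$ on the simplex is globally attracted to the quasi-stationary profile $\vc{p}^\star$ --- this is itself a nontrivial stability statement about a PS-type flow with routing, essentially of the same character as the open Lyapunov-function problem the paper's Remark discusses, so it cannot be waved through; and (b) a rigorous singular-perturbation estimate justifying the time-scale separation. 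Without these you have neither global existence nor the defining property $\inf_{t\geq\delta}\|\vc{z}(t)\|_1>0$: the crude bound $\frac{d}{dt}\|\vc{z}\|_1 \geq \lambda - \mu_I - \nu\|\vc{z}\|_1$ does not prevent hitting zero in finite time when $\lambda < \mu_I$, which Assumption~\ref{FL:ass:load} permits.

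The zero-initial-state uniqueness reduction also fails as stated. First, the linear departure $\vc{z}(t) = \beta^\star\vc{p}^\star t + o(t)$ is assumed via an ansatz, not derived: the ODE is only required for $t>0$, and a priori an FMS could leave the origin non-linearly or with non-convergent proportions. Second, and more fundamentally, even if two FMSs from $\vc{0}$ share the same leading-order asymptotics, they need never coincide at any positive time, so uniqueness ``away from the origin'' (which requires identical values at some common $t_0>0$) never becomes applicable; one would instead need a weighted Gronwall contraction near $t=0$ that controls the singular Lipschitz constant $1/\|\vc{z}(t)\|_1 \sim 1/(\beta^\star t)$, and such an estimate is not sketched. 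The paper sidesteps the entire near-origin battle by a different device you do not exploit: summing the integral characterization \eqref{FL:eq:fluid_model_int} shows that the norm $\|\vc{z}(\cdot)\|_1$ solves the one-dimensional single-stage PS fluid equation \eqref{FL:eq:norm_fms_int}, whose unique solution is known (Gromoll--Robert--Zwart, Corollary~3.8) to be bounded away from zero outside $t=0$; the coordinates are then explicit functionals of the norm via \eqref{FL:eq:fluid_model_int}, and existence is obtained probabilistically, since fluid limits of $\vc{Q}^r(\cdot)$ are shown to be FMSs. If you wish to complete your purely ODE route, the missing quantitative estimates near $\|\vc{z}\|_1 = 0$ are the whole proof, not a finishing touch.
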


\begin{proof}
Existence of FMS's is established in Sections~\ref{FL:sec:fluid_limit} and~\ref{FL:sec:fluid_limit_proof}: fluid limits of the population process~$\vc{Q}(\cdot)$ are FMS's. When proving uniqueness, we distinguish between two cases. If the initial state is non-zero, the uniqueness follows from the description~\eqref{FL:eq:fluid_model_dif} by the Gronwall inequality: it applies because the RHS of \eqref{FL:eq:fluid_model_dif} is Lipschitz continuous on sets $\{ \vc{z} \in \nneg^I \colon ||\vc{z}||_1 \geq a \}$, $a>0$. In case the initial state is zero, we use the description~\eqref{FL:eq:fluid_model_int}. The summation of the equations of~\eqref{FL:eq:fluid_model_int} where $\vc{z}(0) = \vc{0}$ implies that the norm $\|\vc{z}(\cdot)\|_1$ solves the following equation: for $t \geq 0$,
\begin{equation} \label{FL:eq:norm_fms_int}
x(t) = \lambda \int_0^t \pr \Bigl\{ B_1^I > \int_s^t \frac{du}{x(u)}, \, D > t-s \Bigr\} \, ds.
\end{equation}
The last equation is, in fact, the fluid model of a PS-queue with single-stage service. It is studied in~Gromoll et al. \cite{GRZ08} and shown to have a unique solution that is bounded away from zero outside $t = 0$, see Corollary~3.8. So the norm $\|\vc{z}(\cdot)\|_1$ is unique. Then a solution to~\eqref{FL:eq:fluid_model_int} must be unique as well, since the individual coordinates $z_i(\cdot)$ are uniquely defined by the norm~$\| \vc{z}(\cdot) \|_1$ in~\eqref{FL:eq:fluid_model_int}. 
\end{proof}

In the next theorem we characterize the invariant (constant) FMS.

\begin{theorem}
There exists a unique invariant FMS, which is given by
\begin{equation} \label{FL:eq:0}
\begin{split}
z_1^\ast &= \dfrac{\lambda}{\mu_1 + \nu \|\vc{z}^\ast\|_1} \,\|\vc{z}^\ast\|_1, \\
z_i^\ast &= \dfrac{\mu_{i-1}}{\mu_i + \nu \|\vc{z}^\ast\|_1}\,  z_{i-1}^\ast, \quad i \geq 2,
\end{split}
\end{equation}
where $\|\vc{z}^\ast\|_1$ solves
\begin{equation} \label{FL:eq:2}
\begin{split}
f(\|\vc{z}^\ast\|_1) := \lambda \biggl( \frac{1}{\mu_1 + \nu \|\vc{z}^\ast\|_1} + \frac{\mu_1}{(\mu_1 + \nu \|\vc{z}^\ast\|_1) (\mu_2 + \nu \|\vc{z}^\ast\|_1)}&  \\
+ \cdots+ \frac{\mu_1 \ldots \mu_{I-1}}{(\mu_1 + \nu \|\vc{z}^\ast\|_1) \ldots (\mu_I + \nu \|\vc{z}^\ast\|_1)} &\biggr) = 1.
\end{split}
\end{equation}
\end{theorem}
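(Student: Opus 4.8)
The plan is to observe that an invariant (constant) FMS is precisely a constant solution $\vc{z}(t) \equiv \vc{z}^\ast$ of the differential system~\eqref{FL:eq:fluid_model_dif}, subject to the FMS requirement $\|\vc{z}^\ast\|_1 > 0$; a constant trajectory with zero norm would violate $\inf_{t \geq \delta}\|\vc{z}(t)\|_1 > 0$, and in any case renders the drift terms $z_i/\|\vc{z}\|_1$ undefined. Writing $s := \|\vc{z}^\ast\|_1 > 0$ and setting each $z_i'(t)$ in~\eqref{FL:eq:fluid_model_dif} equal to zero produces an algebraic system that I would solve directly. The $i=1$ equation $\lambda = (\mu_1/s + \nu)\, z_1^\ast$ gives $z_1^\ast = \lambda s/(\mu_1 + \nu s)$, and the $i \geq 2$ equation $\mu_{i-1} z_{i-1}^\ast/s = (\mu_i/s + \nu)\, z_i^\ast$ gives the recursion $z_i^\ast = \mu_{i-1} z_{i-1}^\ast/(\mu_i + \nu s)$. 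These are exactly the formulas~\eqref{FL:eq:0}, and they express every coordinate as a function of the single unknown~$s$.

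Next I would close the system by imposing the consistency relation $\sum_{i=1}^I z_i^\ast = s$. Unrolling the recursion yields the closed form
\[
z_i^\ast = \lambda s \, \frac{\mu_1 \cdots \mu_{i-1}}{(\mu_1 + \nu s)\cdots(\mu_i + \nu s)}
\]
(with the empty product equal to $1$ for $i=1$), so summing over $i$ and cancelling the common factor $s > 0$ reduces the consistency relation to the scalar equation $f(s) = 1$, with $f$ the function in~\eqref{FL:eq:2}. Consequently the invariant FMS exists and is unique if and only if $f(s)=1$ has a unique positive root: each such root determines a valid non-negative $\vc{z}^\ast$ via~\eqref{FL:eq:0} with $\|\vc{z}^\ast\|_1 = s$, and conversely the norm of any invariant FMS is a positive root of $f$.

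It then remains to analyse $f$ on $\pos$. It is manifestly continuous, and every summand has a constant numerator and a denominator that is a strictly increasing product of positive affine functions of $s$; hence $f$ is strictly decreasing. For the boundary behaviour, $f(0+) = \lambda \sum_{i=1}^I 1/\mu_i$, which is strictly greater than $1$ precisely by the overload Assumption~\ref{FL:ass:load}, while the $i$-th summand decays like $s^{-i}$ as $s \to \infty$, so $f(s) \to 0$. By strict monotonicity together with the intermediate value theorem there is exactly one $s^\ast \in \pos$ with $f(s^\ast) = 1$, which completes the argument.

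The computation is essentially routine, so there is no serious obstacle; the one place that genuinely uses the hypotheses is the identity $f(0+) = \lambda\sum_i 1/\mu_i$ combined with Assumption~\ref{FL:ass:load}, which is exactly what guarantees a positive root and hence forces the invariant FMS to have positive norm (as the FMS definition demands) rather than collapsing to the origin. The main conceptual point is that the triangular, cascading structure of the stationary drift equations lets one solve the $I$-dimensional fixed-point problem coordinate by coordinate and collapse it to a single scalar monotone equation.
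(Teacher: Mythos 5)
Your proposal is correct and follows essentially the same route as the paper's proof: set the drift in~\eqref{FL:eq:fluid_model_dif} to zero, solve the resulting triangular system coordinate-wise to obtain~\eqref{FL:eq:0}, sum and cancel the norm to reduce to the scalar equation $f(s)=1$, and conclude via strict monotonicity of $f$ together with $f(0+)=\lambda\sum_{i=1}^I 1/\mu_i>1$ (Assumption~\ref{FL:ass:load}) and $f(s)\to 0$ as $s\to\infty$. Your added care in excluding the zero constant trajectory and in stating the two-way correspondence between roots of $f$ and invariant FMSs is a minor refinement of, not a departure from, the paper's argument.
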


\begin{proof}
By definition, an invariant FMS must be non-zero. It follows from the description~\eqref{FL:eq:fluid_model_dif} that an invariant FMS $\vc{z}^\ast = (z_1^\ast, \ldots, z_I^\ast)$ is defined by the following system of equations:
\begin{equation} \label{FL:eq:fp_dif}
\begin{split}
&\lambda - \mu_1 \frac{z_1^\ast}{\|\vc{z}^\ast\|_1} - \nu z_1^\ast = 0, \\
&\mu_{i-1} \frac{z_{i-1}^\ast}{\|\vc{z}^\ast\|_1} - \mu_i \frac{z_i^\ast}{\|\vc{z}^\ast\|_1} - \nu z_i^\ast = 0, \quad i \geq 2.
\end{split}
\end{equation}

As we solve the $i$-th equation in~\eqref{FL:eq:fp_dif} with respect to~$z_i^\ast$, we obtain~\eqref{FL:eq:0}.

Now,~\eqref{FL:eq:0} is equivalent to
\begin{equation*}
\begin{split}
z_1^\ast &= \dfrac{\lambda}{\mu_1 + \nu \|\vc{z}^\ast\|_1} \, \|\vc{z}^\ast\|_1, \\
z_i^\ast &= \dfrac{\mu_{i-1} \ldots \mu_1}{(\mu_i + \nu \|\vc{z}^\ast\|_1) \ldots (\mu_2 + \nu \|\vc{z}^\ast\|_1)} \, z_1^\ast, \quad i \geq 2.
\end{split}
\end{equation*}

As we sum up over the last set of equations and divide by $\|\vc{z}^\ast\|_1$ on both sides,~\eqref{FL:eq:2} follows.

Note that equations~\eqref{FL:eq:0}--\eqref{FL:eq:2} have a unique solution. Indeed, the  function $f(\cdot)$ is strictly decreasing in $(0,\infty)$ and takes all values between $\lambda \sum_{i=1}^I 1/\mu_i$ (which is bigger than~$1$ by Assumption~\ref{FL:ass:load}) and 0 as it arguments runs from $0$ to $\infty$. Hence \eqref{FL:eq:2} uniquely defines the norm $\|\vc{z}^\ast\|_1$, and then \eqref{FL:eq:2} uniquely defines the individual coordinates $z_i$ via $\|\vc{z}^\ast\|_1$.
\end{proof}

Finally, we show that the invariant FMS found above is asymptotically stable.

\begin{theorem} \label{FL:th:fixed_point_stable}
Any FMS $\vc{z}(t)$ converges to the unique invariant FMS $\vc{z}^\ast$ as $t \to \infty$.
\end{theorem}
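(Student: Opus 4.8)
The plan is to split the argument into two stages. First I would show that the total mass $x(t):=\|\vc{z}(t)\|_1$ converges to $\|\vc{z}^\ast\|_1$; then, using this, I would recover convergence of the individual coordinates $z_i(t)$ by exploiting the cascade (lower-triangular) structure of~\eqref{FL:eq:fluid_model_dif}. The first stage is where the real work lies: it is handled by reducing the norm to a single-stage PS fluid model and invoking the known convergence-to-the-invariant-point result of Gromoll et al.~\cite{GRZ08}. This reduction is exactly what Section~\ref{FL:appendix} establishes.

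For the first stage, I would sum the integral equations~\eqref{FL:eq:fluid_model_int} over $i=1,\ldots,I$. Since $B_j^{j-1}=0$ and the events $\{B_j^{i-1}\le\tau<B_j^i\}$ telescope in $i$, the summation collapses to
\begin{equation*}
\begin{split}
x(t) ={}& \sum_{j=1}^I z_j(0)\,\pr\Bigl\{ B_j^I > \int_0^t \frac{du}{x(u)},\ D>t \Bigr\} \\
&+ \lambda \int_0^t \pr\Bigl\{ B_1^I > \int_s^t \frac{du}{x(u)},\ D>t-s \Bigr\}\,ds,
\end{split}
\end{equation*}
which is exactly the fluid-model equation of a single-stage PS queue whose service requirement is the hypoexponential variable $B_1^I=\sum_{l=1}^I B_l$ (with initial customers carrying residual requirements $B_j^I$). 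Assumption~\ref{FL:ass:load} reads $\lambda\,\ex B_1^I=\lambda\sum_i 1/\mu_i>1$, i.e.\ this single-stage queue is overloaded, so the convergence result of~\cite{GRZ08} applies and yields $x(t)\to x^\ast$, where $x^\ast$ is the unique solution of~\eqref{FL:eq:2}, namely $x^\ast=\|\vc{z}^\ast\|_1$.

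For the second stage, I would feed $x(t)\to x^\ast$ back into~\eqref{FL:eq:fluid_model_dif} and solve coordinate by coordinate. Writing $a_i(t):=\mu_i/x(t)+\nu$, each coordinate obeys a scalar linear ODE $z_i'(t)=g_i(t)-a_i(t)z_i(t)$, with forcing $g_1(t)\equiv\lambda$ and $g_i(t)=(\mu_{i-1}/x(t))\,z_{i-1}(t)$ for $i\ge 2$. Since $x(t)\to x^\ast>0$ we have $a_i(t)\to a_i^\ast:=\mu_i/x^\ast+\nu>0$. Proceeding by induction on $i$ (the base case being the constant forcing of $z_1$), the inductive hypothesis $z_{i-1}(t)\to z_{i-1}^\ast$ gives $g_i(t)\to g_i^\ast$; the variation-of-constants formula $z_i(t)=e^{-\int_0^t a_i(u)\,du}z_i(0)+\int_0^t e^{-\int_s^t a_i(u)\,du}g_i(s)\,ds$ then yields $z_i(t)\to g_i^\ast/a_i^\ast$, using the elementary fact that the decaying memory kernel $e^{-\int_s^t a_i(u)\,du}$ concentrates near $s=t$ and has total mass tending to $1/a_i^\ast$ whenever $a_i(\cdot)\to a_i^\ast>0$. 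A direct computation shows $g_i^\ast/a_i^\ast$ coincides with the invariant coordinate in~\eqref{FL:eq:0}, closing the induction and proving $\vc{z}(t)\to\vc{z}^\ast$.

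The main obstacle is the first stage: justifying that the summed norm equation is genuinely covered by the single-stage analysis of~\cite{GRZ08}. One must check that their hypotheses accommodate a phase-type service requirement and arbitrary initial residual requirements $B_j^I$, and that their conclusion delivers convergence of the total mass from an \emph{arbitrary} fluid initial condition rather than from a distinguished one. By contrast, the second stage is routine: because $a_i(t)\ge\nu>0$ for all $t$, the kernel satisfies $e^{-\int_s^t a_i(u)\,du}\le e^{-\nu(t-s)}$, so its control is uniform in $t$ and the limit passage requires no extra hypotheses.
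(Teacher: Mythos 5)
Your proposal is correct, and its first stage is exactly the paper's argument: the paper also sums~\eqref{FL:eq:fluid_model_int} to obtain the norm equation~\eqref{FL:eq:fms_norm}, recognises it as the single-stage PS fluid model with phase-type service requirement $B_1^I$, and invokes Theorem~2.4 of Gromoll et al.~\cite{GRZ08} to get $\|\vc{z}(t)\|_1 \to \|\vc{z}^\ast\|_1$ (the paper disposes of your ``main obstacle'' with the same observation you make: the terms coming from initial customers vanish as $t\to\infty$, so the slight mismatch with the equation treated in~\cite{GRZ08} is harmless). Your second stage, though, genuinely differs from the paper's. The paper (Section~\ref{FL:appendix}) stays with the integral description: it compares~\eqref{FL:eq:fluid_model_int} to the representation~\eqref{FL:eq:fp_int} of the invariant coordinates and proves $\int_0^t f_i(s,t)\,ds \to \ex\min\{\|\vc{z}^\ast\|_1 B_1^i, D\}$ by sandwiching $\|\vc{z}(u)\|_1$ between $\|\vc{z}^\ast\|_1\pm\eps$ for $u\ge t_\eps$ and letting the contribution of $[0,t_\eps]$ die out. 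You instead return to the differential description~\eqref{FL:eq:fluid_model_dif} and run an induction along the tandem, treating each coordinate as a scalar linear ODE $z_i' = g_i - a_i z_i$ and using variation of constants together with the uniform kernel bound $e^{-\int_s^t a_i} \le e^{-\nu(t-s)}$ (guaranteed by the impatience term); your verification that $g_i^\ast/a_i^\ast$ reproduces~\eqref{FL:eq:0} is right, and the boundedness $g_i(t)\le\mu_{i-1}$ from $z_{i-1}/\|\vc{z}\|_1\le 1$ keeps the forcing under control even near $t=0$ when $\vc{z}(0)=\vc{0}$ (start the variation-of-constants formula at some $t_0>0$ to avoid the degenerate norm there). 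Your route is arguably more elementary and self-contained, exploiting the lower-triangular structure and standard asymptotics for linear ODEs with convergent coefficients; the paper's route is tied to the probabilistic integral representation and therefore mirrors the machinery of~\cite{GRZ08} more directly, which is the form that would survive a relaxation of the exponential assumptions, where the ODE description~\eqref{FL:eq:fluid_model_dif} is no longer available. Both deliver the theorem, and both hinge on the same external input for the norm.
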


When proving the last theorem, we again refer to the paper~\cite{GRZ08} on PS with single-stage service. The equations of~\eqref{FL:eq:fluid_model_int} summed up give: for $t \in \nneg$,
\begin{equation} \label{FL:eq:fms_norm}
\begin{split}
\|\vc{z}(t)\|_1 =& \sum_{j=1}^I z_j(0) \, \pr \Bigl\{B_j^I > \int_0^t \frac{du}{\|\vc{z}(u)\|_1}, D>t \Bigr\} \\
&+ \lambda \int_0^t \pr \Bigl\{ B_1^I > \int_s^t \frac{du}{\|\vc{z}(u)\|_1}, \, D > t-s \Bigr\} \, ds.
\end{split}
\end{equation}
In the last equation, we put $\vc{z}(\cdot) \equiv \vc{z}^\ast$ and take $t \to \infty$, which implies that the norm $\|\vc{z}^\ast\|_1$ of the invariant FMS should solve the equation
\begin{equation} \label{FL:eq:fp_norm}
x = \lambda \ex \min\{ x(B_1 + \ldots + B_I), D \}.
\end{equation}
Gomoll et al.\ \cite{GRZ08} show that \eqref{FL:eq:fp_norm} has a unique solution, so it must be $\|\vc{z}^\ast\|_1$; see Theorem~2.4 in~\cite{GRZ08}. It also follows from Theorem~2.4 that all solutions $\|\vc{z}(t)\|_1$ to~\eqref{FL:eq:fms_norm} converge to the unique solution $\|\vc{z}^\ast\|_1$ of~\eqref{FL:eq:fp_norm} as $t \to \infty$. To be precise, the theorem works with a slightly different equation than~\eqref{FL:eq:fms_norm}, but the difference is in the terms that represent the initial customers and vanish as $t \to \infty$. Now that we have the convergence of the norm $\|\vc{z}(t)\|_1 \to \|\vc{z}^\ast\|_1$ for any FMS $\vc{z}(\cdot)$, the coordinate-wise convergence can be shown with the use of the same ideas as in Theorem~2.4 of Gromoll et al.\ \cite{GRZ08}. We provide the proof in Section~\ref{FL:appendix} for completeness.

{\begin{remark}[Asymptotic stability of the invariant point via Lyapunov functions]
An alternative way to establish the asymptotic stability of the invariant solution to the fluid model~\eqref{FL:eq:fluid_model_dif} would be to suggest a Lyapunov function, i.e.\ a function $L \colon (0,\infty)^I \to \nneg$ such that $L(\vc{z}) \to \infty$ as $\|\vc{z}\|_1 \to \infty$ and whose derivative with respect to~\eqref{FL:eq:fluid_model_dif} is non-positive. It is known that a PS-queue with $I$ classes of customers, a Markovian routing and no impatience admits the entropy Lyapunov function (see Bramson \cite{Bram96})
\begin{equation} \label{eq:Lyapunov_entropy}
L_\textup{lg}(\vc{z}) := \sum_{i=1}^I z_i \ln \left( \frac{z_i / \|\vc{z}\|_1}{z_i^\ast/\|\vc{z}^\ast\|_1} \right).
\end{equation}
It can also be checked along the lines of Theorem~\cite[Theorem ]{ALOHA}  that a PS-queue with impatience (different rates $\nu_i$ for different classes are allowed) and no routing admits the quadratic Lyapunov function
\[
{L}_\textup{qd}(\vc{z}) = \sum_{i=1}^I \dfrac{(z_i-z_i^\ast)^2}{\mu_i z_i^\ast / \|\vc{z}^\ast\|_1}.
\]
Whether there is a Lyapunov function for a PS-queue with both routing and impatience is an open problem. In the particular case of a PS-queue with multistage service, where the routing is tandem (from class~$i$ to $i+1$) and the impatience parameters are the same for all classes, the mentioned open problem does not seem to become easier. We have, however, come up with some partial solutions, which we present here without a proof. For a general Markovian routing and the same impatience parameters for all classes, the entropy Lyapunov function \eqref{eq:Lyapunov_entropy} works if there are  $I=2$ classes of customers, and if there are $I >2$ classes, it can be shown to work  everywhere except for a~compact set (the derivative of $L_\textup{lg}(\vc{z})$ with respect to~\eqref{FL:eq:fluid_model_dif} is non-negative on $\{\vc{z} \in (0,\infty)^I \colon \|\vc{z}\|_1 \geq \|\vc{z}^\ast\|_1$). In case of $I=2$ classes, a general Markovian routing $(P_{i,j})_{i,j=1}^2$ and different impatience parameters $\nu_1, \nu_2$, the following quadratic Lyapunov function works:
\[
L_\textup{qd}(\vc{z}) = \alpha_1 (z_1 - z_1^\ast)^2 + \alpha_2 (z_2 - z_2^\ast)^2,
\]
where
\[
\alpha_1 = \frac{1}{[(1 - P_{1,1}) \mu_1 + P_{2,1} \mu_2 ] q_1}, \quad
\alpha_2 = \frac{1}{[(1 - P_{2,2}) \mu_2  + P_{1,2} \mu_1 ] q_2}.
\]
\end{remark}}

\paragraph{Probability for a freelancer to get a job} In the next section we discuss in what sense the fluid model approximates the stochastic PS-model. Here we estimate the chance of a freelancer getting a job based on the fluid model and under the following additional assumptions:
\begin{itemize}
\item[(A1)] the application limit $I$ is large,
\item[(A2)] all freelancers that applied for the same job have equal chances to get the job.
\end{itemize}
For the basic model of a freelance website presented in the previous section, the invariant point equations \eqref{FL:eq:0}--\eqref{FL:eq:2} 
can be rewritten as
\[
z_i^\ast = \frac{\lmb}{\nu} u^i (1-u), \quad 0 \leq i \leq I-1,
\]
where $z_i^\ast$ stands for the number of jobs on the website with $i$ applications and $u$ is the unique solution to
\begin{equation}
\frac{\lmb}{\mu} \sum_{i=1}^I u^i = 1. \label{eq:0}
\end{equation}
(We make the substitution $u:=\mu/(\mu+\nu \| \vc{z}^\ast\|_1)$ to obtain the equation for $u$.)

Now, by \eqref{FL:eq:fp_dif}, out of $\lmb$ jobs arriving per time unit, the fraction $\nu z_i^\ast / \lmb$ will leave with $i=0, \ldots, I-1$ applications due to impatience, and the fraction $\mu z_{I-1}^\ast / (\lmb \|\vc{z}^\ast\|_1)$ will be patient enough to collect $I$ applications. Then, by the assumption (A2), the probability for a freelancer to get a job under the application limit $I$ is
\[
P_I = \sum_{i=1}^{I-1} \frac{1}{i} \frac{\nu z_i^\ast}{\lmb} + \frac{1}{I} \frac{\mu z_{I-1}^\ast}{\lmb \|\vc{z}^\ast \|_1} = \sum_{i=1}^I \frac{1}{i} u^i (1-u) + \frac{1}{I} u^I.
\]
As $I \to \infty$, the solution $u$ to \eqref{eq:0} and $P_I$ converge:
\begin{align*}
u &\to u_\infty := \frac{\mu}{\lmb+\mu}, \\
P_I & \to \sum_{i=1}^\infty \frac{1}{i} u_\infty^i (1-u_\infty) = --  (1-u_\infty)\ln (1-u_\infty).
\end{align*}
Hence, for a large application limit $I$, the probability to get a job can be approximated by
\[
P_\infty =  - \frac{\lmb}{\lmb+\mu} \ln \frac{\lmb}{\lmb+\mu}.
 \]
 Interestingly, neither $P_I$ nor $P_\infty$ depend on the impatience parameter $\nu$. Intuitively, bigger $\nu$'s, i.e. smaller patience times, should result in jobs collecting less applications meaning less competition among freelancers and a higher chance to get a job.

\section{Fluid limit theorem} \label{FL:sec:fluid_limit}
In this section, we show that, under a proper scaling, the PS-queue with multistage service converges to the fluid model introduced in Section~\ref{FL:sec:fluid_model}.

Consider a family of stochastic PS-queues upper-indexed by positive numbers $r$, all of them defined on the probability space $(\Omega, \mathcal{F}, \pr)$. Let the arrival rate $\lambda$ and the parameters $\mu_i$ of service stages be the same in all models (and satisfy Assumption~\ref{FL:ass:load}), and the impatience parameter of model~$r$ be $\nu/r$. Define the fluid scaled population processes
\begin{equation} \label{FL:eq:scaled_process}
\overline{\vc{Q}}^{\,r}(t) := \vc{Q}^r(rt) / r, \quad t \in \nneg.
\end{equation}

We refer to weak limits along subsequences of the processes~\eqref{FL:eq:scaled_process} as {\it fluid limits}. They can be characterised as solutions to the differential/integral equations \eqref{FL:eq:fluid_model_dif}--\eqref{FL:eq:fluid_model_int}, as the next theorem asserts.

\begin{theorem} \label{FL:th:fluid_limit} Suppose that $\overline{\vc{Q}}^{\,r}(0) \Rightarrow \vc{z}(0)$ as $r \to \infty$, where $\vc{z}(0)$ is a random vector. Then the processes $\overline{\vc{Q}}^{\,r}(\cdot)$ converge weakly in the Skorokhod space $\mathbf{D}(\nneg, \nneg^I)$ to the unique FMS with initial state $\vc{z}(0)$.
\end{theorem}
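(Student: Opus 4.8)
The plan is to follow the standard two-step program for fluid limits: establish relative compactness of the scaled family in $\mathbf{D}(\nneg,\nneg^I)$, then show that every subsequential weak limit is a fluid model solution. Since a FMS with a given initial state is unique (by the first theorem of this section), the two steps together force the whole family to converge to that FMS.

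First I would rewrite the dynamic equations \eqref{FL:eq:dyn_1}--\eqref{FL:eq:dyn_2} under the scaling \eqref{FL:eq:scaled_process}, keeping in mind that model~$r$ carries impatience parameter $\nu/r$. The change of variables $u\mapsto ru$ turns the arrival term into $A^r(rt)/r$, the stage-$i$ service term into $\frac1r\Pi_i^\textup{s}\!\bigl(\mu_i r\int_0^t \overline{Q}_i^r(u)/\|\overline{\vc{Q}}^r(u)\|_1\,du\bigr)$, and the abandonment term into $\frac1r\Pi_i^\textup{a}\!\bigl(\nu r\int_0^t \overline{Q}_i^r(u)\,du\bigr)$; the ratio $Q_i/\|\vc{Q}\|_1$ is scale-invariant, and the factor $\nu/r$ is tuned precisely so that abandonment survives, rather than blows up, at fluid scale. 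Each inner time change is Lipschitz in~$t$: the service ones with constant $\mu_i$ because $Q_i/\|\vc{Q}\|_1\le1$, the abandonment ones with a constant controlled by $\|\overline{\vc{Q}}^r\|_1$, which in turn is bounded on $[0,T]$ by the elementary inequality $\|\overline{\vc{Q}}^r(t)\|_1\le\|\overline{\vc{Q}}^r(0)\|_1+A^r(rt)/r$ (sum the equations and drop the non-negative departure terms). Together with $\overline{\vc{Q}}^r(0)\Rightarrow\vc{z}(0)$ and the functional strong law of large numbers ($\frac1r\Pi_i(rt)\to t$ and $\frac1r A^r(rt)\to\lambda t$, uniformly on compacts), this yields compact containment and, since each driving term is an $r$-scaled Poisson process composed with a uniformly Lipschitz time change, asymptotic equicontinuity; hence the family is C-tight. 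Because the jumps of $\overline{\vc{Q}}^r$ have size $1/r\to0$, every subsequential limit $\vc{z}(\cdot)$ is continuous.

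Next I would pass to the limit along a convergent subsequence, using the Skorokhod representation theorem to work with almost-sure uniform-on-compacts convergence. The functional law of large numbers and the random-time-change theorem then send the service and abandonment terms to $\mu_i\int_0^t z_i(u)/\|\vc{z}(u)\|_1\,du$ and $\nu\int_0^t z_i(u)\,du$, so that $\vc{z}$ satisfies the integrated version of \eqref{FL:eq:fluid_model_dif}. The ratio integrands are dominated by~$1$ and converge at every $u$ where $\|\vc{z}(u)\|_1>0$, so by bounded convergence the whole argument hinges on a single point: ruling out that $\|\vc{z}\|_1$ vanishes.

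This non-degeneracy, namely $\inf_{t\ge\delta}\|\vc{z}(t)\|_1>0$ for every $\delta>0$, is the main obstacle, and it is also exactly what makes the ratios $z_i/\|\vc{z}\|_1$ well defined. I would resolve it through the single-stage reduction behind \eqref{FL:eq:norm_fms_int}: under PS every customer receives capacity share $1/\|\vc{Q}\|_1$ and, because the abandonment rate $\nu/r$ does not depend on the stage, each customer leaves the system either by abandoning, at rate $\nu/r$, or once his accumulated service reaches the phase-type requirement $B_1+\cdots+B_I$. Consequently the total-population process $\|\vc{Q}^r(\cdot)\|_1$ is itself the number-in-system of a single-stage PS-queue with impatience and service requirement $B_1+\cdots+B_I$, whose fluid model is studied by Gromoll et al.\ \cite{GRZ08}; under the overload Assumption~\ref{FL:ass:load} its unique fluid solution stays bounded away from zero outside the origin. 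This identifies $\|\vc{z}(\cdot)\|_1$ with that solution, supplies the required lower bound, and thereby confirms that $\vc{z}$ is a genuine FMS. With subsequential limits all equal to the unique FMS started at $\vc{z}(0)$, the full family $\overline{\vc{Q}}^r(\cdot)$ converges weakly to it, as claimed.
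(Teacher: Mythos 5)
Your proposal is correct and follows the same overall architecture as the paper: compact containment via the bound $\|\overline{\vc{Q}}^{\,r}(t)\|_1 \leq \|\overline{\vc{Q}}^{\,r}(0)\|_1 + A(rt)/r$, oscillation control to get $\mathbf{C}$-tightness, identification of subsequential limits as FMS's with the crucial non-degeneracy $\inf_{t\geq\delta}\|\vc{z}(t)\|_1>0$ imported from the single-stage reduction to \cite{GRZ08} (the paper cites Lemma~6.1 there, under Assumption~\ref{FL:ass:load}), and finally uniqueness of the FMS to upgrade subsequential to full weak convergence. The one place where you genuinely diverge is the oscillation control for the abandonment terms $D_i^{r,\textup{a}}(r\cdot)/r$: you bound the inner time change $\nu\int_0^t \overline{Q}_i^{\,r}(u)\,du$ directly, using that its derivative is at most $\nu\bigl(\|\overline{\vc{Q}}^{\,r}(0)\|_1 + A(rT)/r\bigr)$ on $[0,T]$, so that on the high-probability compact-containment event the time change is Lipschitz with a fixed constant and the FLLN for $\Pi_i^\textup{a}$ finishes the job; the paper instead couples the abandonments with the departure process of an $M/M/\infty$ queue fed by the same arrivals and patience times, and invokes the fluid limit for $M/M/\infty$ from \cite{Robert} to get a continuous limit whose modulus of continuity controls that of $D_i^{r,\textup{a}}(r\cdot)/r$. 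Your route is more elementary and self-contained (no external $M/M/\infty$ fluid limit needed), at the mild cost of having to intersect with the containment event; the paper's coupling gives a pathwise domination that holds unconditionally. Your identification step (Skorokhod representation plus bounded convergence of the ratio integrands, dominated by~$1$) is essentially equivalent to the paper's device of the mappings $\varphi_i$, which are continuous at continuous paths that are non-zero outside $t=0$, combined with the continuous mapping theorem and the random-time-change theorem to kill the centered Poisson terms; both hinge on exactly the non-degeneracy you single out, and your reduction of the total population to a single-stage PS queue with service requirement $B_1+\cdots+B_I$ and exponential impatience is the same observation the paper uses.
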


The proof is given in Section~\ref{FL:sec:fluid_limit_proof}. First, we show that the family of the scaled processes~\eqref{FL:eq:scaled_process} is relatively compact by checking the compact containment and oscillation control conditions. That is, we show that fluid limits exist. Then we check that they are FMS's by deriving the fluid model equations~\eqref{FL:eq:fluid_model_dif} from the scaled stochastic dynamic equations~\eqref{FL:eq:dyn_1}--\eqref{FL:eq:dyn_2}.

\section{Equivalence of the two fluid model descriptions} \label{FL:sec:fluid_model_proof}
This proof partly relies on the ideas of the proof of a similar result in \cite{ALOHA}, see Lemma~2, but it is more involved. In particular, it uses (and establishes) the special property~\eqref{FL:eq:5} of the phase-type distribution.

Let a function $\vc{z} \colon \nneg \to \real_+^I$ be continuous and non-zero outside $t = 0$. 

\paragraph{Proof of \eqref{FL:eq:fluid_model_dif} $\Rightarrow$ \eqref{FL:eq:fluid_model_int}}  Suppose that $\vc{z}(\cdot)$ is a solution to~\eqref{FL:eq:fluid_model_dif}. Consider the following Cauchy problem with respect to $\vc{u}(\cdot)$: for $t>0$,
\begin{equation} \label{FL:eq:3}
\begin{split}
u'_1(t) &= \lmb - \mu_1 \dfrac{u_1(t)}{\|\vc{z}(t)\|_1} - \nu u_1(t),\\
u'_i(t) &= \mu_{i-1} \dfrac{u_{i-1}(t)}{\|\vc{z}(t)\|_1} - \mu_i \dfrac{u_i(t)}{\|\vc{z}(t)\|_1} - \nu u_i(t), \quad i \geq 2, \\
\vc{u}(0) &= \vc{z}(0).
\end{split}
\end{equation}
This problem has at most one continuous solution. Indeed, let $\vc{u}(\cdot)$ and $\wtil{\vc{u}}(\cdot)$ be two continuous solutions to~\eqref{FL:eq:3}. Then the difference $\vc{w}(\cdot) := (\vc{u} - \wtil{\vc{u}})(\cdot)$ satisfies: for $t>0$,
\begin{equation*}
\begin{split}
w'_1(t) &= - w_1(t) \left(\frac{\mu_1}{\|\vc{z}(t)\|_1} + \nu \right),\\
w'_i(t) &=  w_{i-1}(t)\frac{\mu_{i-1}}{\|\vc{z}(t)\|_1} - w_i(t) \left(\frac{\mu_i}{\|\vc{z}(t)\|_1} + \nu \right), \quad i \geq 2, \\
\vc{w}(0) &= \vc{0}.
\end{split}
\end{equation*}
Note that if $w_1(t) > 0$, then $w'_1(t) < 0$, and the other way around. Then $w_1(\cdot) \equiv 0$ (see e.g.\ \cite[Lemma 1]{ALOHA}) and $w'_2(t) = - w_2(t) (\mu_i / \|\vc{z}(t)\|_1 + \nu )$, $t>0$. To each pair $w_i(\cdot)$ and $w_{i+1}(\cdot)$, we apply the same reasoning as to $w_1(\cdot)$ and $w_2(\cdot)$, and thus obtain $\vc{w}(\cdot) \equiv \vc{0}$.

It is straightforward to check that the LHS and RHS of~\eqref{FL:eq:fluid_model_int} both satisfy~\eqref{FL:eq:3}. Since a solution to~\eqref{FL:eq:3} must be unique, the LHS and RHS of~\eqref{FL:eq:fluid_model_int} must coincide.

\paragraph{Proof of \eqref{FL:eq:fluid_model_int} $\Rightarrow$ \eqref{FL:eq:fluid_model_dif}} Suppose now that $\vc{z}(\cdot)$ solves~\eqref{FL:eq:fluid_model_int}. As we differentiate the RHS of~\eqref{FL:eq:fluid_model_int}, it follows that, for $t > 0$,
\begin{equation*}
\begin{split}
z_1'(t)  &= \lambda - \mu_1 \frac{z_1(t)}{\|\vc{z}(t)\|_1} - \nu z_1(t), \\
z_i'(t)  &= \displaystyle{\sum_{j=1}^i} \left(f_{B_j^{i-1}} - f_{B_j^i}\right) \left( \int_0^t \dfrac{du}{\|\vc{z}(u)\|_1} \right) \dfrac{\pr\{D>t\}}{\|\vc{z}(t)\|_1} \,  \\
&\phantom{=}+ \lmb \displaystyle{\int_0^t} \left(f_{B_1^{i-1}} - f_{B_1^i}\right) \left( \int_s^t \dfrac{du}{\|\vc{z}(u)\|_1} \right) \dfrac{\pr\{D>t-s\}}{\|\vc{z}(t)\|_1} \,  \, ds \\
&\phantom{=}- \nu z_i(t), \quad i \geq 2,
\end{split}
\end{equation*}
where $f_{B_j^i}(\cdot)$ denotes the probability density function of the phase-type random variable
\[
B_j^i := \left\{
\begin{array}{ll}
\sum_{l=j}^i B_l, & j \leq i, \\
0, & j > i,
\end{array}
\right.
\]
where $B_l$ is distributed exponentially with parameter $\mu_l$.

At this stage, in order to have~\eqref{FL:eq:fluid_model_dif}, it suffices to show that, for $t>0$,
\begin{equation} \label{FL:eq:4}
\begin{split}
\mu_i z_i(t) =& \sum_{j=1}^i f_{B_j^i} \left( \int_0^t \dfrac{du}{\|\vc{z}(u)\|_1} \right) \pr\{D>t\} \\
&+ \lmb \int_0^t f_{B_1^i} \left( \int_s^t \dfrac{du}{\|\vc{z}(u)\|_1} \right) \dfrac{\pr\{D>t-s\}}{\|\vc{z}(t)\|_1} \,  ds.
\end{split}
\end{equation}
In turn, in order to have~\eqref{FL:eq:4} under the assumption~\eqref{FL:eq:fluid_model_int}, it suffices to show that, for all $i \in \ntr$ and $x \in \real$,
\begin{equation*}
\dfrac{1}{\mu_i} f_{B_1^i}(x) = \pr \{ B_1^{i-1} \leq x < B_1^i \},
\end{equation*}
or equivalently,
\begin{equation} \label{FL:eq:5}
\pr \{ B_1^i > x \} = \sum_{j=1}^i \dfrac{1}{\mu_j} f_{B_1^j}(x).
\end{equation}
(To be precise, for $i>I$, we need to introduce random variables $B_i$ distributed exponentially with parameters~$\mu_i$, mutually independent with each other and with $B_j$, $j \leq I$.)

We prove~\eqref{FL:eq:5} by induction: it holds for $i=1$, assume that it holds for an $i \geq 1$, we have to check that it holds for $i+1$ as well. By the convolution formula,
\begin{align*}
\pr \{ B_1^{i+1} > x \} 
 &= \int_0^\infty \pr \{(y + B_2^{i+1} > x\} f_{B_1}(y) dy \\
&=  \int_x^\infty f_{B_1}(y) dy + \int_0^x \pr \{B_2^{i+1} > x-y\} f_{B_1}(y) dy.
\end{align*}
Now we incorporate the induction hypothesis and obtain
\begin{align*}
\pr \{ B_1^{i+1} > x \} 
&= \pr\{ B_1 > x\} + \sum_{j=2}^{i+1} \dfrac{1}{\mu_{j}} \int_0^x f_{B_2^j}(x - y)f_{B_1}(y) dy \\
&= \dfrac{1}{\mu_1} f_{B_1}(x) + \sum_{j=2}^{i+1} \dfrac{1}{\mu_{j}} \int_{-\infty}^\infty f_{B_2^j}(x - y)f_{B_1}(y) dy \\
&=  \dfrac{1}{\mu_1} f_{B_1}(x) + \sum_{j=2}^{i+1} \dfrac{1}{\mu_{j}} f_{B_1^j}(x).
\end{align*}
So~\eqref{FL:eq:5} indeed holds and implies~\eqref{FL:eq:4}; and \eqref{FL:eq:4}, in turn, implies~\eqref{FL:eq:fluid_model_dif}.

\section{Proof of Theorem~\ref{FL:th:fixed_point_stable}} \label{FL:appendix}

It follows from the fluid model description~\eqref{FL:eq:fluid_model_int}, that the coordinates of the invariant FMS $\vc{z}^\ast$ are uniquely defined by its norm via
\begin{equation} \label{FL:eq:fp_int}
z_i^\ast = \lambda \ex \min\{\| \vc{z}^\ast \|_1 B_1^i, D \}  - \lambda \ex \min\{\| \vc{z}^\ast \|_1 B_1^{i-1}, D \} \quad \text{for all $i$}.
\end{equation}

It is shown in~Gromoll et al. \cite[Theorem 2.4]{GRZ08} that, for any FMS $\vc{z}(\cdot)$, we have $\|\vc{z}(t)\|_1 \to \|\vc{z}^\ast\|_1$ as $t \to \infty$. Here we derive the coordinate-wise convergence from the convergence of the norms.

As we compare~\eqref{FL:eq:fluid_model_int} to \eqref{FL:eq:fp_int}, it follows that, in order to have $z_i(t) \to z_i^\ast$ as $t \to \infty$, it suffices to show that, for all $i$,
\begin{equation} \label{FL:eq:25}
\int_0^t f_i(s,t) \,ds \\
\to \ex \min\{\| \vc{z}^\ast \|_1 B_1^i, D \},
\end{equation}
where
\[
f_i(s,t) = \pr \Bigl\{ B_1^i > \int_s^t \frac{du}{\|\vc{z}(u)\|_1}, D>t-s \Bigr\}.
\]

Fix an $\eps \in (0,\|\vc{z}^\ast\|_1)$ and let $t_\eps$ be such that
\[
\|\vc{z}^\ast\|_1 - \eps \leq \|\vc{z}(t)\|_1 \leq \|\vc{z}^\ast\|_1 + \eps \quad \text{for all $t \geq t_\eps$}.
\]

For any fixed $s$, $f_i(s,t) \to 0$ as $t \to \infty$, and then, by the dominated convergence theorem,
\begin{equation} \label{FL:eq:24}
\int_0^{t_\eps} f_i(s,t) \, ds \to 0 \quad \text{as $t \to \infty$}.
\end{equation}

For all $t \geq t_\eps$, we have
\begin{align*}
\int_{t_\eps}^t f_i(s,t) \, ds &\leq \int_{t_\eps}^t \pr \Bigl\{ B_1^i > \int_s^t \frac{du}{\|\vc{z}^\ast\|_1+\eps}, \, D>t-s \Bigr\} \, ds \\
&\leq \int_{0}^{t - t_\eps} \pr \left\{ \min\{ (\|\vc{z}^\ast\|_1+\eps)B_1^i, \,D \} \geq s\right\} \, ds,
\end{align*}
which, in combination with~\eqref{FL:eq:24}, implies that
\[
\limsup_{t \to \infty} \int_0^t f_i(s,t) \, ds \leq \ex \min\{ (\|\vc{z}^\ast\|_1+\eps) B_1^i, \, D \}.
\]

Similarly, we obtain
\[
\liminf_{\,t \to \infty} \int_0^t f_i(s,t) \, ds \geq \ex \min\{ (\| \vc{z}^\ast\|_1-\eps) B_1^i, \, D \}.
\]

As we take $\eps \to 0$ in the last two equations, \eqref{FL:eq:25} follows.

\section{Proof of Theorem~\ref{FL:th:fluid_limit}} \label{FL:sec:fluid_limit_proof}
The proof consists of two parts. First we show that the family of the fluid scaled processes $\overline{\vc{Q}}^{\, r}(\cdot)$ is $\mathbf{C}$-tight, i.e.\ that fluid limits exist and are continuous. Then we check that fluid limits are FMS's, i.e.\ that they are bounded away from zero outside $t=0$ and solve the fluid model equations~\eqref{FL:eq:fluid_model_dif}.

Throughout the proof, we use the following representation of the processes~$\vc{Q}^r(\cdot)$ (they all are defined on the same probability space $(\Omega, \mathcal{F}, \pr)$): for $t \in \nneg$,
\begin{equation} \label{FL:eq:dyn_1_r}
\begin{split}
Q_1^r(t) &= Q_1^r(0) + A(t) - D_1^{r, \textup{s}}(t) - D_1^{r,\textup{a}}(t), \\
Q_i^r(t) &= Q_i^r(0) + D_{i-1}^{r,\textup{s}}(t) - D_i^{r,\textup{s}}(t) - D_i^{r,\textup{a}}(t), \quad i \geq 2,
\end{split}
\end{equation}
with
\begin{equation} \label{FL:eq:dyn_2_r}
\begin{split}
D_i^{r,\textup{s}}(t) &= \Pi_i^\textup{s} \left( \mu_i \int_0^t \frac{Q^r_i(u)}{\|\vc{Q}^r(u)\|_1}  \,du \right), \\
D_i^{r,\textup{a}}(t) &= \Pi_i^\textup{a} \left( \dfrac{\nu}{r} \int_0^t Q^r_i(u) du \right),
\end{split}
\end{equation}
where the processes $A(\cdot)$ and $\Pi_i^\textup{s}(\cdot), \Pi_i^\textup{a}(\cdot)$ are the same as in \eqref{FL:eq:dyn_1}--\eqref{FL:eq:dyn_2}, except that this time we assume them to be independent from the family of the initial states $\vc{Q}^r(0)$.

\paragraph{$\mathbf{C}$-tightness} In order to prove that the family of the processes $\overline{\vc{Q}}^{\,r}(\cdot)$ is $\mathbf{C}$-tight, it suffices to show that the following two properties hold (see Ethier and Kurtz~\cite{EthierKurtz}): for any $T > 0$ and $\eps > 0$, there exist an $M < \infty$ and a $\delta > 0$ such that
\begin{equation} \label{FL:eq:compact}
\liminf_{\,r \to \infty} \pr \{ \| \overline{\vc{Q}}^{\,r}(T) \|_1 \leq M \} \geq 1 - \eps,
\end{equation}
and
\begin{equation} \label{FL:eq:oscillation}
\liminf_{\,r \to \infty} \pr \{ \sup_{\begin{subarray}{l} s,t\in [0,T], \\ |s-t|<\delta \end{subarray}} \|\overline{\vc{Q}}^{\,r}(s) - \overline{\vc{Q}}^{\,r}(t) \|_1 \leq \eps \} \geq 1 - \eps.
\end{equation}

The compact containment condition~\eqref{FL:eq:compact} follows easily by the upper bound
\[
\| \overline{\vc{Q}}^{\,r}(T) \|_1 \leq \| \overline{\vc{Q}}^{\,r}(0) \|_1 + A(rT)/r \Rightarrow \|\vc{z}(0)\|_1 + \lambda T \quad \text{as $r \to \infty$}.
\]
Take an $\wtil{M} < \infty$ that is a continuity point for the distribution of $\|\vc{z}(0)\|_1$ such that $\pr \{ \|\vc{z}(0)\|_1 \leq \wtil{M} \} \geq 1 - \eps$ and put $M = \wtil{M} + \lambda T + 1$.

To establish the oscillation control condition~\eqref{FL:eq:oscillation}, it is enough to have oscillations of the scaled departure processes $D_i^{r,\textup{s}}(r \cdot)/r$ and $D_i^{r,\textup{a}}(r \cdot)/r$ bounded.

Define the modulus of continuity for functions $x \colon \nneg \to \real$,
\[
\omega(x,T,\delta) := \sup \{ |x(s) - x(t)| \colon s,t \in [0,T],  |s-t|<\dlt \}.
\]

First we estimate oscillations of $D_i^{r,\textup{s}}(r \cdot)/r$. We have, for all $t \geq s \geq 0$,
\begin{align*}
&\left| \dfrac{D_i^{r,\textup{s}}(r s)}{r} - \dfrac{D_i^{r,\textup{s}}(r t\,)}{r} \right| \leq \, |G^{r,\textup{s}}_i(s)| + |G^{r,\textup{s}}_i(t)| + \mu_i \int_s^t \dfrac{\overline{Q}_i^{\, r}(u)}{\| \overline{\vc{Q}}^{\, r}(u) \|_1} \,du,
\end{align*}
where, for all $t \in \nneg$,
\begin{equation} \label{FL:eq:10}
\begin{split}
G^{r,\textup{s}}_i(t) :=  \dfrac{1}{r} \Pi_i^\textup{s}\left( r\mu_i \int_0^t \dfrac{\overline{Q}_i^{\, r}(u)}{\| \overline{\vc{Q}}^{\, r}(u) \|_1} \,du\right) 
- \mu_i \int_0^t \dfrac{\overline{Q}_i^{\, r}(u)}{\| \overline{\vc{Q}}^{\, r}(u) \|_1} \,du.
\end{split}
\end{equation}
Then
\begin{equation} \label{FL:eq:6}
\omega\left(\frac{D_i^{r,\textup{s}}(r \cdot)}{r},T,\delta\right)  \leq \ 2 \sup_{t \in [0,\mu_i T]} \left|\dfrac{\Pi_i^\textup{s}(rt)}{r} - t\right| + \delta.
\end{equation}

Now we switch to $D_i^{r,\textup{a}}(r \cdot)/r$. Consider a family of $M/M/\infty$ queues with a common arrival process $A(\cdot)$, queue~$r$ starting with $\|\vc{Q}^r(0)\|_1$ customers, and service times in queue~$r$ being patience times of the corresponding customers in the $r$-th PS-queue with multistage service. Denote the departure process of the $r$-th $M/M/\infty$-queue by $\wtil{D}^r(\cdot)$. We have, for all $i$ and $s,t \in \nneg$,
\begin{equation*}
\left| \dfrac{D_i^{r,\textup{a}}(r s)}{r} - \dfrac{D_i^{r,\textup{a}}(r t)}{r} \right| \leq \left| \dfrac{\wtil{D}^r(rs)}{r}  - \dfrac{\wtil{D}^r(r t)}{r} \right|,
\end{equation*}
and hence,
\begin{equation}  \label{FL:eq:7}
\omega(D^{r,\text{a}}(r\cdot)/r,T,\delta) \leq \omega(\wtil{D}^r(r\cdot)/r,T,\delta).
\end{equation}

By e.g.\ Robert \cite{Robert}, the scaled processes $\wtil{D}^r(r\cdot)/r$ converge weakly in the Skorokhod space $\mathbf{D}(\nneg, \nneg)$ to a continuous limit, which we denote by $\wtil{D}(\cdot)$. (Although technically the fluid scalings considered in Robert \cite{Robert} and here are different: arrival rates and space versus time and space, they result in the same distributions of the scaled processes, and hence the same limit.)

Since the modulus of continuity $\omega(\cdot,T,\dlt)$ as a function on $\mathbf{D}(\nneg,\real)$ is continuous at any continuous $x(\cdot)$, we have, by the continuous mapping theorem,
\begin{equation} \label{FL:eq:8}
\omega(\wtil{D}^r(r\cdot)/r,T,\delta) \Rightarrow \omega(\wtil{D}(\cdot),T,\delta) \quad \text{as $r \to \infty$}.
\end{equation}
Since continuity implies uniform continuity on compact sets, we also conclude that
\begin{equation} \label{FL:eq:9}
\omega(\wtil{D}(\cdot),T,\delta) \Rightarrow 0 \quad \text{as $\delta \to \infty$}
\end{equation}

Finally, as we put together the FLLN for $A(\cdot)$, \eqref{FL:eq:6} and the FLLN for $\Pi_i^\textup{s}(\cdot)$, and also~\eqref{FL:eq:7}--\eqref{FL:eq:9}, it follows that one can pick a~$\delta$ such that~\eqref{FL:eq:oscillation} holds.

\paragraph{Fluid limits as FMS's} Now that we know that fluid limits exist, it is left to check that they are FMS's. Consider a fluid limit $\wtil{\vc{Q}}(\cdot)$ along a subsequence $\{\overline{\vc{Q}}^{\, q}(\cdot)\}_{q \to \infty}$. The $\mathbf{C}$-tightness part of the proof implies that $\wtil{\vc{Q}}(\cdot)$ is a.s.\ continuous. As we discussed before, the total population process of a PS-queue with multistage service behaves as an ordinary, single-stage-service PS-queue, whose fluid limits are studied by Gromoll et al. \cite{GRZ08}. In particular, it follows from Assumption~\ref{FL:ass:load} and~\cite[Lemma~6.1]{GRZ08} that a.s., for all $\delta>0$, $\inf_{t \geq \delta}\|\wtil{\vc{Q}}(t)\|_1 > 0$. We will now show that $\wtil{\vc{Q}}(\cdot)$ a.s.\ satisfies the fluid model equations~\eqref{FL:eq:fluid_model_dif}, and this will finish the proof.

Consider the mappings $\varphi_i \colon \mathbf{D}(\nneg,\nneg^I) \to \mathbf{D}(\nneg,\real)$, $i = 1, \ldots, I$, given by
\begin{align*}
\varphi_1(\vc{x})(t) =:&\ x_1(t) - x_1(0) - \lambda t + \mu_1 \int_0^t \dfrac{x_1(u)}{\|\vc{x}(u)\|_1}\,du + \nu \int_0^t x_1(u) \, du, \\
\varphi_i(\vc{x})(t) =:&\ x_i(t) - x_i(0) - \mu_{i-1} \int_0^t \dfrac{x_{i-1}(u)}{\|\vc{x}(u)\|_1}\,du + \mu_i \int_0^t \dfrac{x_i(u)}{\|\vc{x}(u)\|_1}\,du + \nu \int_0^t x_i(u) \, du, \quad i \geq 2.
\end{align*}

These mappings are continuous at any $\vc{x}(\cdot)$ that is continuous and non-zero outside $t = 0$. Then, by the continuous mapping theorem, for all~$i$,
\begin{equation} \label{FL:eq:14}
\varphi_i(\overline{\vc{Q}}^q) \Rightarrow \varphi_i(\wtil{\vc{Q}}) \quad \text{as $q \to \infty$}.
\end{equation}

On the other hand, it follows from the stochastic dynamics~\eqref{FL:eq:dyn_1_r}--\eqref{FL:eq:dyn_2_r} that, for all $q$ and $t \in \nneg$,
\begin{equation} \label{FL:eq:11}
\begin{split}
\varphi_1(\overline{\vc{Q}}^q)(t) &= (A(qt)/q - \lambda t) - G^{\,q,\textup{s}}_1(t) - G^{\,q,\textup{a}}_1(t), \\
\varphi_i(\overline{\vc{Q}}^q)(t) &= G^{\,q,\textup{s}}_{i-1}(t) - G^{\,q,\textup{s}}_i(t) - G^{\,q,\textup{a}}_i(t), \quad i \geq 2,
\end{split}
\end{equation}
where, for all $i$ and $t \in \nneg$,
\[
G^{\,q,\textup{a}}_i(t) := \dfrac{1}{q} \Pi_i^\textup{a}\left( q \nu \int_0^t \overline{Q}_i^{\, q}(u) \,du\right) - \nu \int_0^t \overline{Q}_i^{\, q}(u)\,du,
\]
and the processes $G^{\,q,\textup{s}}_i(\cdot)$ were defined earlier by~\eqref{FL:eq:10}.

Next we use the following result (see e.g.\ Billingsley \cite{Billingsley}).

\begin{proposition}[{\bf Random time change theorem}] \label{FL:prop:random_time_change}
Consider stochastic processes $X^q(\cdot) \in \mathbf{D}(\nneg,S)$, where $S$ is a complete and separable metric space, and non-decreasing stochastic processes $\Phi^q(\cdot) \in \mathbf{D}(\nneg,\nneg)$. Assume that the joint convergence $(X^q, \Phi^q)(\cdot) \Rightarrow (X, \Phi)(\cdot)$ holds as $q \to \infty$, and that the limits $X(\cdot)$ and $\Phi(\cdot)$ are a.s.\ continuous. Then $X^q(\Phi^q(\cdot)) \Rightarrow X(\Phi(\cdot))$ in $\mathbf{D}(\nneg,S)$ as $q \to \infty$.
\end{proposition}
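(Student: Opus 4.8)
The plan is to reduce this probabilistic statement to a deterministic fact about composition of functions and then lift it back to weak convergence via the Skorokhod representation theorem. First I would apply Skorokhod representation to the jointly convergent pair: since $S$ is complete and separable, the product space $\mathbf{D}(\nneg,S)\times\mathbf{D}(\nneg,\nneg)$ is a separable metric space, so there exist, on a common probability space, processes $(\wtil X^q,\wtil\Phi^q)$ and $(\wtil X,\wtil\Phi)$ with the same laws as the originals and such that $(\wtil X^q,\wtil\Phi^q)\to(\wtil X,\wtil\Phi)$ almost surely. Because the conclusion is a statement about laws, it suffices to prove $\wtil X^q(\wtil\Phi^q(\cdot))\to\wtil X(\wtil\Phi(\cdot))$ almost surely in $\mathbf{D}(\nneg,S)$; I drop the tildes from here on.

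The key simplification is the hypothesis that $X$ and $\Phi$ are a.s.\ continuous. I would invoke the standard fact that $J_1$ convergence in $\mathbf{D}(\nneg,\cdot)$ to a continuous limit is automatically uniform on every compact time interval. Applying this to each coordinate gives, almost surely and for every $T>0$,
\[
\sup_{t\in[0,T]} d\bigl(X^q(t),X(t)\bigr)\to 0, \qquad \sup_{t\in[0,T]} |\Phi^q(t)-\Phi(t)|\to 0 \quad\text{as } q\to\infty,
\]
where $d$ denotes the metric on $S$. Note also that $\Phi^q\ge 0$ guarantees $\Phi^q(t)$ always lies in the domain $[0,\infty)$ of $X^q$, so all compositions are well defined.

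Next I would carry out the composition estimate on the full-probability event where the above holds. Fixing $T>0$, continuity of $\Phi$ bounds it on $[0,T]$, say by $T_0:=1+\sup_{t\le T}\Phi(t)$, and uniform convergence then forces $\Phi^q(t)\le T_0$ for all $t\in[0,T]$ once $q$ is large. For such $q$ and all $t\in[0,T]$ the triangle inequality yields
\[
d\bigl(X^q(\Phi^q(t)),X(\Phi(t))\bigr)\le \sup_{s\in[0,T_0]} d\bigl(X^q(s),X(s)\bigr) + d\bigl(X(\Phi^q(t)),X(\Phi(t))\bigr).
\]
The first term vanishes by uniform convergence of $X^q\to X$ on $[0,T_0]$; the second vanishes because $X$ is uniformly continuous on the compact $[0,T_0]$ while $\Phi^q\to\Phi$ uniformly on $[0,T]$. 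Hence $X^q(\Phi^q(\cdot))\to X(\Phi(\cdot))$ uniformly on $[0,T]$, and since $T$ is arbitrary and the limit $X\circ\Phi$ is continuous, this uniform-on-compacts convergence is exactly $J_1$ convergence in $\mathbf{D}(\nneg,S)$. Almost-sure convergence implies convergence in distribution, which transfers back to the original processes.

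The main obstacle is conceptual rather than computational: the composition map $(x,\phi)\mapsto x\circ\phi$ is \emph{not} continuous on all of $\mathbf{D}(\nneg,S)\times\mathbf{D}(\nneg,\nneg)$, so one cannot simply quote the continuous mapping theorem for a composition functional. The whole argument hinges on restricting to the event where both limits are continuous, as this is precisely what upgrades Skorokhod convergence to locally uniform convergence and makes the split above legitimate; without continuity of $\Phi$ the inner time change could jump and the estimate would fail.
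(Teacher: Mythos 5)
Your proof is correct, but it is worth noting that the paper itself contains no proof of this proposition: it is quoted as a known result with a pointer to Billingsley \cite{Billingsley}. The textbook argument behind that citation establishes that the composition map $(x,\phi)\mapsto x\circ\phi$, restricted to non-decreasing time changes $\phi$, is continuous at every pair of \emph{continuous} limit points, and then applies the continuous mapping theorem in the form valid for maps continuous almost everywhere under the limit law. Your route replaces that final step by the Skorokhod representation theorem, which is legitimate here since $\mathbf{D}(\nneg,S)$ is Polish when $S$ is (and so is the product space), and it buys you something: you avoid having to verify measurability of the composition functional and of its set of continuity points, which is the fussiest part of the continuous-mapping version. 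The deterministic core of your argument --- $J_1$ convergence to a continuous limit is uniform on compacts, followed by the triangle-inequality split using uniform convergence of $X^q$ on $[0,T_0]$ and uniform continuity of $X$ there --- is exactly the classical estimate, and your closing remark correctly identifies why a naive continuous-mapping appeal fails. One small omission: you never use the hypothesis that $\Phi^q$ is non-decreasing, and your convergence estimate indeed does not need it, but it is what guarantees that the composition $X^q(\Phi^q(\cdot))$ is a well-defined element of $\mathbf{D}(\nneg,S)$ at all (for a general c\`adl\`ag time change the composition can fail to have left limits or be right-continuous); a one-sentence check that a non-decreasing c\`adl\`ag $\Phi^q$ composed with a c\`adl\`ag $X^q$ is again c\`adl\`ag would make the proof complete.
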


Put $X^q(t) = \Pi_i^\textup{s}(qt)/q - t$ and $\Phi^q(t) = \nu \int_0^t \overline{Q}_i^{\, q}(u)\,du$ for all~$t \in \nneg$. The marginal weak limits of these processes are $X(\cdot) \equiv 0$ and $\Phi(\cdot) = \nu \int_0^\cdot \wtil{Q}_i(u)\,du$, respectively. Since one of the marginal limits is deterministic, we actually have the joint weak convergence, and then Proposition~\ref{FL:prop:random_time_change} implies that, as $q \to \infty$,
\begin{equation} \label{FL:eq:12}
G_i^{\,q, \textup{a}}(\cdot) \Rightarrow 0 \quad \text{in $\mathbf{D}(\nneg,\real)$}.
\end{equation}
Similarly, 
\begin{equation} \label{FL:eq:13}
G_i^{\,q, \textup{s}}(\cdot) \Rightarrow 0 \quad \text{in $\mathbf{D}(\nneg,\real)$}.
\end{equation}

As we put \eqref{FL:eq:11}--\eqref{FL:eq:13} together with \eqref{FL:eq:14}, it follows that
\[
\text{a.s., for all $i$}, \quad \varphi_i(\wtil{\vc{Q}}) \equiv 0,
\]
which, after differentiation, gives~\eqref{FL:eq:fluid_model_dif}.


\end{document}